\documentclass[leqno,12pt]{article}
\usepackage[margin=1.3in, top = 1.3in, bottom=1.3in]{geometry}
\usepackage[utf8]{inputenc}
\usepackage{amsmath}
\usepackage{amsfonts}
\usepackage{mathrsfs}
\usepackage{tikz}
\usepackage{hyperref}
\usepackage{stmaryrd}
\usepackage{enumerate}
\usepackage{tikz-qtree}
\usetikzlibrary{automata,positioning}
\usepackage{amssymb}
\usepackage{amsthm}
\usepackage{bbm}
\usepackage{mathtools}
\usepackage{algorithm}
\usepackage{algpseudocode}
\usepackage{diffcoeff}
\usepackage{interval}
\usepackage{graphicx}
\usepackage{comment}
\usepackage{fancyhdr}
\usepackage{tikz}
\usepackage{tikz-3dplot}
\usepackage{enumitem}
\usetikzlibrary{shapes,arrows}
\theoremstyle{plain}
\newcommand{\R}{\mathbb{R}}
\newcommand{\st}[1]{\left\{#1\right\}}

\newcommand{\from}{\colon}

\DeclareMathOperator{\prox}{prox}

\newtheorem{theorem}{Theorem}[]
\theoremstyle{definition}
\newtheorem{definition}{Definition}[]

\theoremstyle{plain}
\newtheorem{lemma}[]{Lemma}

\newtheorem{innercustomthm}{Assumption}
\newenvironment{customthm}[1]
  {\renewcommand\theinnercustomthm{#1}\innercustomthm}
  {\endinnercustomthm}

\tikzstyle{decision} = [diamond, draw, fill=blue!20, 
text width=6em, text badly centered, node distance=3cm, inner sep=0pt]
\tikzstyle{block} = [rectangle, draw, fill=blue!20, 
text width=9em, text centered, rounded corners, minimum height=4em]
\tikzstyle{block2} = [rectangle, draw, fill=green!20, 
text width=9em, text centered, rounded corners, minimum height=4em]
\tikzstyle{line} = [draw, -latex']
\tikzstyle{cloud} = [draw, ellipse,fill=red!20, node distance=3cm,
minimum height=4em]

\begin{document}
\title{\vspace{-2.5cm} Minimal enclosing balls via geodesics}
\author{Ariel Goodwin
	\thanks{CAM, Cornell University, Ithaca, NY, USA.
	\texttt{awg77@cornell.edu}  Research supported in part by the NSERC Postgraduate Fellowship PGSD-587671-2024.}
\and
Adrian S. Lewis
\thanks{ORIE, Cornell University, Ithaca, NY, USA.
\texttt{adrian.lewis@cornell.edu} Corresponding Author.
Research supported in part by National Science Foundation Grant DMS-2405685.}
}

\date{\today}

	\maketitle

\begin{abstract}
Algorithms for minimal enclosing ball problems are often geometric in nature.  To highlight the metric ingredients underlying their efficiency, we focus here on a particularly simple geodesic-based method.  A recent subgradient-based study proved a complexity result for this method in the broad setting of geodesic spaces of nonpositive curvature. We present a simpler, intuitive and self-contained complexity analysis in that setting, which also improves the convergence rate.  We furthermore derive the first complexity result for the algorithm on geodesic spaces with curvature bounded above.

\end{abstract}

\noindent{\bf Key words:} minimal enclosing ball, geodesic, 1-center, CAT($k$) space
\medskip

\noindent{\bf AMS Subject Classification:}  90C48, 65Y20, 51-08, 53C22, 68Q25
\medskip

The \textit{minimal enclosing ball problem} for a bounded set $A$ in a metric space $(X,d)$ seeks a ball of minimal radius containing $A$.  Specifically, we seek a {\em center\/}: a point $x \in X$ minimizing the objective value $f(x) = \sup_A d(x,\cdot)$. A relaxed version instead seeks an {\em approximate center\/}:  a point $x\in X$ whose optimality gap $f(x) - \inf f$ is bounded by some given tolerance.

	Invitingly simple in metric formulation, minimal enclosing balls
	have widespread application throughout computer science and operations research.
	For finite subsets of the plane, Sylvester studied the problem in 1857 \cite{Sylvester1857}, and  the intervening 150 years have seen a wide range of sophisticated algorithms in Euclidean space. 
	
Non-Euclidean versions of the problem are also familiar.  Operations researchers, for example, study \textit{minimax facility location}, placing a facility to minimize worst-case distance to several given locations in a network.  Modeling the network as a \textit{metric tree} $X$ --- an undirected tree graph with nonnegative edge lengths and distance $d$ measuring the path length between any two points in $X$ --- produces a minimal enclosing ball problem.  The following 1973 algorithm of Handler \cite{Handler1973} is strikingly simple.
\begin{quote}
From any initial point in $A$, find a furthest point $a$ in $A$.  From $a$, find a furthest point $b$ in $A$.  The midpoint of $a$ and $b$ is the unique center of~$A$.
\end{quote}

	While not quite rivaling the simplicity of Handler's method, which needs just $2|A|-2$ distance computations and one midpoint calculation, a common algorithmic strategy for the minimal enclosing ball problem also relies only on distance and path computations:  at each iteration we simply choose a point in $A$ furthest from the current iterate and move some distance towards it. This strategy is well-defined in particular in any \textit{geodesic metric space} $(X,d)$.  Given any points $x$ and $y$ in such a space, at a distance $d(x,y)=\delta$, there is an isometry $\gamma\from [0, \delta]\to X$ 
	with $\gamma(0) = x$ and $\gamma(\delta) = y$. We call $\gamma$ a \textit{geodesic} joining $x$ to $y$.  When such geodesics are always unique, we call the space \textit{uniquely geodesic}.  In that case, 
	we denote the corresponding geodesic image by $[x,y]$, and for each $t\in [0,1]$ we denote by $(1-t)x+ty$ the unique point on $[x,y]$ at a distance $t\delta$ from $x$. With this notation, we study the following procedure.
	
	\begin{algorithm}[h!]
		\caption{Approximate Minimal Enclosing Ball}
			\begin{algorithmic}
				\Require $\st{t_k}_{k=0}^\infty \subset [0,1]$ 
				\State Choose $x^0 \in A$
				\For{$k=0,1,2,\dots$}
				\State From $x^k$, choose a furthest point $\bar a^k$ in $A$
				\State $x^{k+1} \leftarrow (1-t_k)x^k + t_k \bar a^k$
				\EndFor
				\end{algorithmic}
	\label{GeodesicAlgorithm}
	\end{algorithm}

\noindent
Various assumptions on the set $A$, such as finiteness or compactness, can ensure the existence of the requisite furthest points.
	The method's performance depends on the choice of stepsize sequence $\{t_k\}$:  typical predetermined schemes involve constant or slowly decaying sequences like $\{\frac{1}{k}\}$ or 
	$\{\frac{1}{\sqrt{k}}\}$.  
	 
	 In Euclidean space, Algorithm \ref{GeodesicAlgorithm} was introduced by \cite{Badoiu2003}, and then generalized for minimal enclosing balls with respect to Bregman divergences in \cite{Nock2005}, and on Riemannian manifolds of bounded sectional curvature in \cite{Arnaudon2013}.  Recently, \cite{LewisHoroballs2024} studied the complexity of Algorithm \ref{GeodesicAlgorithm} 
	in the broad setting of \textit{Hadamard spaces}: complete 
	geodesic metric spaces of nonpositive curvature. This curvature condition, known as the \textit{CAT(0) property}, amounts to the following simple strong convexity property.
	\begin{definition}
		\label{def:CAT(0)}
		A geodesic metric space $(X,d)$ is CAT(0) if for any point $z\in X$ and any geodesic $\gamma$ the map
		$t\mapsto d^2(\gamma(t),z)-t^2$ is convex. 
\end{definition}
CAT(0) spaces are uniquely geodesic \cite[Lemma 1.2.2]{Bacak2014}. Then for any points $x,y,z\in X$, the convexity property in Definition \ref{def:CAT(0)} for the geodesic joining $x$ to $y$ amounts to the following inequality: for any scalars $\alpha,\beta\in [0,1]$ such that
$\alpha + \beta =1$,
\begin{equation}
		\label{eqn:CAT(0)ineq}
		d^2(\alpha x+ \beta y,z) ~\leq~ \alpha d^2(x,z) + \beta d^2(y,z) -  \alpha\beta d^2(x,y).
\end{equation}

The Hadamard space setting covers in particular Hilbert space and all complete simply-connected Riemannian manifolds of nonpositive sectional curvature, such as hyperbolic space, and
	positive-definite matrix space with its affine-invariant metric \cite{Boumal2023}.  However, Hadamard spaces need not be linear or manifolds.  For example, metric trees and, more generally, CAT(0) cubical complexes are Hadamard spaces, with distinctive features like bifurcating geodesics \cite{Ardila2012}.
	The \textit{BHV space} 
	of phylogenetic trees is a CAT(0) cubical complex with interesting biological applications
	\cite{Billera2001}.  In these examples of Hadamard spaces, geodesics are computationally tractable \cite{Hayashi2021,Owen2011}, making Algorithm \ref{GeodesicAlgorithm} an inviting tool.
	
The simplicity of Algorithm \ref{GeodesicAlgorithm} stands in stark contrast to published complexity analyses.  In particular, \cite{LewisHoroballs2024} specializes a general subgradient-type algorithm
on Hadamard spaces, and the Riemannian analysis of \cite{Arnaudon2013} relies on both smoothness and curvature bounds.  In fact, however, Algorithm \ref{GeodesicAlgorithm} admits a strikingly elementary complexity analysis in Hadamard space, relying only on Definition \ref{def:CAT(0)}. The argument we present, like the algorithm itself, uses only geodesics:  we make no use of tools from metric geometry like (sub)gradients, tangent cones, exponential maps, rays, or angles. 

The following assumption describes our basic setting and notation.
	\begin{customthm}{A}~
		\label{assumptionA}
		\begin{enumerate}[label=(\roman*)]
			\item $(X,d)$ is a Hadamard space. 
			\item $A$ is a bounded subset of $X$, with diameter $D$.
			\item Corresponding to each point in $X$ is at least one furthest point in $A$.
			\item The objective function $F \colon X \to \R$ is defined by $F(x) = \max_{a\in A}d^2(x,a)$.
		\end{enumerate}
	\end{customthm}

\noindent
The center $x_A$ of $A$ is the unique minimizer of $F$ \cite[II.2.7]{Bridson1999}.  Notice that the furthest point property in condition (iii) holds in particular if $A$ is nonempty and compact. 

A standard computational approach for convex optimization on Hadamard spaces --- indeed the only known general methodology --- is the {\em proximal point method} surveyed in \cite{Bacak2014}.  This method is easy to implement for computing means and medians of finite sets $A$, resulting in algorithms reminiscent of Algorithm \ref{GeodesicAlgorithm} but with the point $\bar a^k$ chosen cyclically or randomly.  The proximal methodology even extends beyond Hadamard spaces, to more general geodesic spaces \cite{Ohta2015}.  Unfortunately, unlike mean and median problems, the minimal enclosing ball problem involves proximal point subproblems of the form
\[
\min_{x \in X} \max_{a \in A} \Big\{ d^2(x,a) + \frac{1}{2\lambda} d^2(x,x^k) \Big\}.
\]
Since this problem seems no easier than the minimal enclosing ball problem itself, we focus instead on the simple geometric approach of Algorithm \ref{GeodesicAlgorithm}.

Algorithm \ref{GeodesicAlgorithm} relies on access to furthest points in the set $A$, which is easy if $A$ is finite. A straightforward extension arises when $A$ is a finite union of closed metric balls $B(a_i, r_i)$, for $i=1,2,\ldots,m$.   Suppose that $X$ has the \textit{geodesic extension property}, meaning that every geodesic in $X$ extends to an isometry with domain
	$\R_+$. To find a furthest point from $x$ in $A$, extend each geodesic from $x$ to the center $a_i$
a distance $r_i$, and then from among the resulting points, choose one furthest from~$x$. 

Curvature conditions like (\ref{eqn:CAT(0)ineq}) naturally lead to complexity results in terms of the squared-distance objective $F$, rather than the objective $f = \sqrt{F}$ that we introduced at the outset.  As we note briefly after the main results, this shift in objective changes the constant in the complexity estimate, but not the {\em rate} of convergence.

	\begin{theorem}
	\label{thm:FastComplexity}
	If Assumption \ref{assumptionA} holds, then any initial sequence of points $x^0,x^1,\ldots,x^N$ generated by Algorithm \ref{GeodesicAlgorithm} with stepsizes
	$t_k = \frac{2}{k+1}$ satisfies
		\[\min_{k=0,\dots,N}F(x^k) - \min F ~\leq~ \frac{4D^2}{N+1}.\]
		\end{theorem}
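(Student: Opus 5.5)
The plan is a one-step descent inequality for the squared distance to the center $x_A$, obtained directly from (\ref{eqn:CAT(0)ineq}), followed by a weighted telescoping sum adapted to the stepsizes $t_k=\frac{2}{k+1}$. Write $F^*=\min F=F(x_A)$, $e_k=d^2(x^k,x_A)$ and $g_k=F(x^k)-F^*\ge 0$.

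\emph{Step 1 (one-step inequality).} Apply (\ref{eqn:CAT(0)ineq}) with $z=x_A$, $x=x^k$, $y=\bar a^k$, $\alpha=1-t_k$ and $\beta=t_k$:
\[
e_{k+1}\le (1-t_k)e_k+t_k d^2(\bar a^k,x_A)-t_k(1-t_k)d^2(x^k,\bar a^k).
\]
Since $\bar a^k\in A$, we have $d^2(\bar a^k,x_A)\le F^*$. Since $\bar a^k$ is furthest from $x^k$, we have $d^2(x^k,\bar a^k)=F(x^k)$. Splitting $-t_k(1-t_k)F(x^k)=-t_kF(x^k)+t_k^2F(x^k)$ gives
\[
e_{k+1}\le (1-t_k)e_k-t_k g_k+t_k^2F(x^k).
\]

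\emph{Step 2 (bounding $F(x^k)$ by $D^2$).} This is the step needing a small geometric observation. In a CAT(0) space, closed balls are convex: Definition \ref{def:CAT(0)} makes $t\mapsto d^2(\gamma(t),z)$ convex along geodesics. So $C=\bigcap_{a\in A}B(a,D)$ is convex and contains $A$. Because $x^0\in A$ and each $x^{k+1}$ lies on the geodesic $[x^k,\bar a^k]$ with $\bar a^k\in A\subset C$, induction gives $x^k\in C$ for all $k$. Hence $F(x^k)\le D^2$.

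\emph{Step 3 (telescoping).} With $t_k=\frac{2}{k+1}$ we have $1-t_k=\frac{k-1}{k+1}$. Multiplying the inequality of Step 1 by $k(k+1)$ gives
\[
k(k+1)e_{k+1}\le (k-1)k\,e_k-2k\,g_k+\frac{4k}{k+1}D^2 .
\]
The weights are chosen precisely so that this telescopes. Summing over $k=1,\dots,N$, the $e$-terms collapse, the left side $N(N+1)e_{N+1}$ is nonnegative, and the initial term $(1-1)\cdot 1\cdot e_1$ vanishes. This yields
\[
2\sum_{k=1}^N k\,g_k\le 4D^2\sum_{k=1}^N\frac{k}{k+1}\le 4ND^2 .
\]
Since $\sum_{k=1}^N k=\frac{N(N+1)}{2}$, it follows that $\min_{1\le k\le N}g_k\le \frac{4D^2}{N+1}$, which is the claimed bound.

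The main obstacle I anticipate is the first step $t_0=\frac{2}{1}=2$, which falls outside $[0,1]$. The telescoping argument is arranged so that the $k=0$ inequality is never used: the weight $k(k+1)$ vanishes at $k=0$, and the sum begins at $k=1$. The only property of $x^1$ that matters is $x^1\in C$, which holds if that step is interpreted as $t_0=1$, i.e.\ $x^1=\bar a^0\in A$. I would also verify that the bound $F(x^k)\le D^2$ is not merely a loose estimate, since it is exactly what produces the constant $4D^2$.
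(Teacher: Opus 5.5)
Your proposal is correct and follows the paper's proof essentially step for step. Both use the same one-step inequality from \eqref{eqn:CAT(0)ineq} with $z=x_A$ and the same invariant $F(x^k)\le D^2$; you obtain the invariant from convexity of the balls $B(a,D)$, the paper from convexity of $F$ along geodesics, and both telescope with the same weights, since your $k(k+1)$ is just twice the paper's $k(k+1)/2$ (divide by $t_k$, multiply by $k$). Your explicit handling of the out-of-range first stepsize $t_0=2$ (the $k=0$ term has zero weight, and one only needs $x^1\in C$, e.g.\ by taking $t_0=1$) addresses a point the paper passes over silently, so your version is slightly more careful there.
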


	\begin{proof}
		For each iteration $k=0,1,\ldots,N-1$, inequality \eqref{eqn:CAT(0)ineq} implies
		\begin{align}
			d^2(x^{k+1},x_A) &= d^2( (1-t_k)x^k + t_k\bar a^k,x_A) \nonumber \\
			&\leq (1-t_k)d^2(x^k,x_A) + t_k d^2(\bar a^k,x_A) -t_k(1-t_k)d^2(x^k,\bar a^k) \nonumber \\
			&\leq (1-t_k)d^2(x^k,x_A) - t_k(F(x^k) - \min F) + t_k^2 F(x^k), \label{first}
	\end{align}
	where the last inequality uses the property $d^2(\bar a^k,x_A) \leq F(x_A) = \min F$ and the 
	identity $F(x^k) = d^2(x^k,\bar a^k)$ is a consequence of the definition of $\bar a^k$.
	Note that $F(x^0) \leq D^2$ because $x^0 \in A$. Then, assuming $F(x^k) \leq D^2$, inequality \eqref{eqn:CAT(0)ineq}
	leads to	
	\[F(x^{k+1}) \leq (1-t_k)F(x^k) + t_k F(\bar a^k) \leq (1-t_k)D^2 + t_k D^2 = D^2.\]
	Thus $F(x^k) \leq D^2$ for all $k$ by induction, from which inequality \eqref{first} implies  
	\begin{equation}
		\label{eqn:termk2}
		t_k(F(x^k) - \min F) \leq (1-t_k)d^2(x^k,x_A) - d^2(x^{k+1},x_A) +t_k^2 D^2.
	\end{equation}
	Divide by $t_k$ and substitute $t_k = \frac{2}{k+1}$ to obtain 
	\[
		F(x^k) - \min F\leq \frac{(k-1)}{2} d^2(x^k,x_A) - \frac{(k+1)}{2}d^2(x^{k+1},x_A) + \frac{2D^2}{k+1}.
	\]
	Multiply by $k$ and sum over $k=0,\dots,N$ so the squared distance terms telescope:
	\[
		\sum_{k=0}^Nk(F(x^k)-\min F) ~\leq~ 0 -\frac{N(N+1)}{2}d^2(x^{N+1},x_A) + 2D^2\sum_{k=0}^N\frac{k}{k+1} \leq 2D^2N.
\]
Since $F(x_k) \ge \min\{F(x_j) : 0 \le j \le N \}$ for all $k$, the result now follows.
	\end{proof}

\noindent
Theorem \ref{thm:FastComplexity} improves the $O(\frac{1}{\sqrt{N}})$ convergence rate proved in \cite{LewisHoroballs2024} to $O(\frac{1}{N})$, a rate previously known only for the special case of Hadamard manifolds \cite{Criscitiello2025}.
	 
Beyond the uniquely geodesic property, the proof of Theorem~\ref{thm:FastComplexity} relies on CAT(0) geometry only to obtain inequality \eqref{eqn:termk2}, after which the argument follows a standard Euclidean model \cite[Theorem 8.31]{Beck2017}.  We therefore next explore relaxing Assumption \ref{assumptionA} to the class of 
CAT($\kappa$) spaces with curvature $\kappa > 0$.  Such spaces are exemplified by Euclidean spheres of radius $\frac{1}{\sqrt{\kappa}}$ with their angular metrics.  	Rather than defining the CAT($\kappa$) property formally, we refer the reader to a comprehensive resource on metric geometry \cite[Chapter II.1]{Bridson1999}.  Worth highlighting, however, is that all Riemannian manifolds with sectional curvature bounded above by $\kappa \ge 0$ are, locally, CAT($\kappa$) spaces:  centered at any point, some ball is a CAT($\kappa$) space \cite[Definition II.1.2 and Theorem II.1A.6]{Bridson1999}.  

Optimal transport has given rise to some recent interest in minimal enclosing balls in Wasserstein spaces \cite{Wang2025}.  While Wasserstein spaces are not in general CAT($\kappa$), we note one interesting 
special case:  the manifold of positive-definite matrices, considered as mean-zero Gaussian measures and equipped with the 2-Wasserstein distance. Restricting attention to those matrices whose eigenvalues are at least $\lambda > 0$ results in a complete Riemannian manifold with sectional curvature at most $3/(2\lambda^2)$ in which geodesics can be computed explicitly:  see \cite[Proposition 2]{Massart2019}, along with \cite{Takatsu2011} and \cite[Theorem 1 and Remarks 2 and 3]{altschuler-chewi-gerber-stromme}.

Convexity-based techniques in CAT($\kappa$) spaces with curvature $\kappa > 0$ depend on restricting the diameter of the space:  otherwise, geodesics may not be unique, balls may not be convex, and minimal enclosing ball problems may have spurious local minima.  We therefore revise Assumption \ref{assumptionA} accordingly.

	\begin{customthm}{B}~
		\label{assumptionB}
		Assumption \ref{assumptionA} holds, modified as follows:
		\begin{enumerate}[label=(\roman*)]
			\item $(X,d)$ is a complete CAT($\kappa$) space, with diameter less than 
				$\frac{\pi}{2\sqrt{\kappa}}$ if $\kappa > 0$.
		\end{enumerate}
	\end{customthm}
	
\noindent
Under Assumption \ref{assumptionB}, the space $X$ is still uniquely geodesic, and the center $x_A$ of $A$ is still the unique minimizer of the objective function $F$ \cite[II.1.4(1) and II.2.7]{Bridson1999}.  Furthermore, for all points $x\in X$ the 
squared distance function
$d^2(\cdot,x)$ is convex when composed with any geodesic (functions with this property are called \textit{convex}), 
and the following relaxed version of inequality \eqref{eqn:CAT(0)ineq} holds.

	\begin{lemma}
		\label{ProxDescent}
		If Assumption \ref{assumptionB}(i) holds, then
		for any points $x,y,z\in X$ and scalars $\alpha,\beta\in (0,1)$ such that $\alpha+\beta =1$, the following inequality holds:
\[	
d^2(\alpha x+ \beta y,z) ~\leq~  
\frac{\alpha d^2(x,z) + \beta d^2(y,z)}{ \max\{\alpha,\beta\} }
~-~ \alpha\beta d^2(x,y).
\]
	\end{lemma}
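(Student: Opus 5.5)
We may assume $\alpha\ge\beta$, since the statement is symmetric under swapping $(x,\alpha)$ with $(y,\beta)$. Then $\max\{\alpha,\beta\}=\alpha\in[\tfrac12,1)$. Write $m=\alpha x+\beta y$, $a=d(x,z)$, $b=d(y,z)$ and $\delta=d(x,y)$, so that $d(m,x)=\beta\delta$ and $d(m,y)=\alpha\delta$. Multiplying through by $\alpha$, the target becomes
\[
\alpha\, d^2(m,z) ~\le~ \alpha a^2+\beta b^2-\alpha^2\beta\,\delta^2 .
\]
The plan is to avoid curvature computations entirely. We use only two facts available under Assumption \ref{assumptionB}, together with the triangle inequality, and then close with a purely real-variable inequality in $a,b,\delta$.

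The first ingredient is convexity. As noted just before the lemma, the function $d^2(\cdot,z)$ is convex along geodesics under Assumption \ref{assumptionB}(i), which gives
\[
d^2(m,z)\le \alpha a^2+\beta b^2 .
\]
The second ingredient is the pair of triangle-inequality bounds through the endpoints:
\[
d(m,z)\le a+\beta\delta, \qquad d(m,z)\le b+\alpha\delta .
\]
In addition, the triple $(a,b,\delta)$ satisfies the triangle constraints $|a-b|\le\delta\le a+b$. It then suffices to show that for all nonnegative $a,b,\delta$ obeying these constraints, the minimum of the three upper bounds on $d^2(m,z)$,
\[
\min\bigl\{\alpha a^2+\beta b^2,\ (a+\beta\delta)^2,\ (b+\alpha\delta)^2\bigr\},
\]
is at most $(\alpha a^2+\beta b^2)/\alpha-\alpha\beta\delta^2$. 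A sanity check at the degenerate point $z=x$, i.e.\ $a=0$ and $b=\delta$, confirms this is plausible. There $d^2(m,z)=\beta^2\delta^2$, while the right side equals $\beta\delta^2(1/\alpha-\alpha)=\beta^2\delta^2(1+\alpha)/\alpha$, so the inequality holds with room to spare.

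To verify the real inequality, I would look for a single convex combination $\lambda_1\cdot(\text{convexity bound})+\lambda_2\cdot(\text{triangle bound via }x)+\lambda_3\cdot(\text{triangle bound via }y)$ that dominates the target. The weights may depend on $\alpha$. The cross terms $2a\beta\delta$ and $2b\alpha\delta$ would then be absorbed using $\delta\le a+b$ and Young's inequality. If no single combination works uniformly, I would split into the cases $a\le b$ and $a\ge b$. In the case $a\le b$, the triangle bound through $x$ is the natural one to use. In the case $a\ge b$, I would use the triangle bound through $y$, together with $\alpha\ge\tfrac12$ to control the coefficient $\alpha^2\beta$.

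The main obstacle is this final real-variable inequality, since the slack $(1/\alpha-1)(\alpha a^2+\beta b^2)$ must absorb $\alpha\beta\delta^2$ uniformly. If the elementary bounds above turn out to be too weak in some regime, the fallback is the CAT($\kappa$) comparison. The comparison triangle for $(x,y,z)$ in the model sphere $M_\kappa^2$ has perimeter less than $2\pi/\sqrt\kappa$ because of the diameter bound, so the CAT($\kappa$) inequality reduces the claim to the sphere. There one checks the inequality directly via the spherical law of cosines, using that all distances are below $\pi/(2\sqrt\kappa)$.
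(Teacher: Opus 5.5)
Your main route fails: the real-variable inequality you reduce to is false whenever $\alpha>\beta$. Take $z=m$ itself, so $a=\beta\delta$ and $b=\alpha\delta$. This is an admissible degenerate triple, with $\delta=a+b$. Your three bounds are then $\alpha\beta\delta^2$, $4\beta^2\delta^2$ and $4\alpha^2\delta^2$. The target right-hand side is $(\alpha a^2+\beta b^2)/\alpha-\alpha\beta\delta^2=\beta\delta^2-\alpha\beta\delta^2=\beta^2\delta^2$. For instance, $\alpha=0.6$, $\beta=0.4$, $\delta=1$ gives $0.24$, $0.64$, $1.44$ against $0.16$. All three bounds strictly exceed the target, so no convex combination of them, no Young-type absorption, and no case split (here $a\le b$) can close the argument. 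By strictness, nearby nondegenerate triples fail as well. The obstruction is structural: plain convexity of $d^2(\cdot,z)$ discards the $-\alpha\beta\delta^2$ term, and the triangle inequality cannot restore it. Your approach works only at $\alpha=\beta=\tfrac12$, where $\delta^2\le(a+b)^2\le 2(a^2+b^2)$ makes the convexity bound alone sufficient. A genuinely quantitative curvature input is needed.

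Your fallback is a valid reduction but not a proof. The comparison step is fine: the right-hand side depends only on $a,b,\delta$, and $d(m,z)\le d(\bar m,\bar z)$ for the comparison triangle, whose sides are below $\pi/(2\sqrt\kappa)$. However, ``one checks directly via the spherical law of cosines'' is the entire content of the lemma, and it is delicate. Letting $\beta\to 0$ already requires $b^2+2a\delta\cos\theta\ge\delta^2$, where $\theta$ is the angle at $\bar x$. This can fail once side lengths slightly beyond $\pi/(2\sqrt\kappa)$ are allowed, so the diameter bound must be used sharply, and you give no argument for this.

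The paper avoids trigonometry altogether. The elementary estimate $\alpha\beta d^2(x,y)\le\alpha\beta\bigl(d(q,x)+d(q,y)\bigr)^2\le\alpha d^2(q,x)+\beta d^2(q,y)$ shows that $m$ minimizes $\alpha d^2(\cdot,x)+\beta d^2(\cdot,y)$. Equivalently, $m=\prox_{\lambda h}(y)$ with $h=d^2(\cdot,x)$ and $\lambda=\alpha/(2\beta)$. The Ohta--P\'alfia proximal estimate $d^2(\prox_{\lambda h}(y),z)\le d^2(y,z)-2\lambda\bigl(h(\prox_{\lambda h}(y))-h(z)\bigr)$, valid for convex $h$ under the diameter bound, then gives $d^2(m,z)\le\frac{\alpha}{\beta}a^2+b^2-\alpha\beta\delta^2$. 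Swapping roles yields the $\max\{\alpha,\beta\}$ form. That proximal inequality is exactly the ingredient missing from your proposal.
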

	
\begin{proof}
Define convex functions $g,h \colon X \to \R$ by $g(q) = \alpha d^2(q,x) + \beta d^2(q,y)$ and $h(q)= d^2(q,x)$ for points $q \in X$. Elementary inequalities show  
\begin{eqnarray*}
g(\alpha x+\beta y) 
&=& 
\alpha\beta d^2(x,y) ~\le~ \alpha\beta \big(d(q,x) + d(q,y)\big)^2 \\
&\le& 
\alpha\beta \big(d^2(q,x) + d^2(q,y)\big) ~+~ \alpha^2 d^2(q,x) ~+~ \beta^2 d^2(q,y) ~=~ g(q).
\end{eqnarray*}
Thus the point $\alpha x + \beta y$ minimizes the function $g$, or equivalently, the function
$h + \frac{\beta}{\alpha}d^2(\cdot,y)$.  Setting $\lambda = \frac{\alpha}{2\beta}$ and using the language of proximal operators \cite{Ohta2015}, we have $\prox_{\lambda h}(y) = \alpha x + \beta y$.  By \cite[Lemma 4.6(I)]{Ohta2015}, for any point $z\in X$ we have
\[
d^2(\prox_{\lambda h}(y),z) ~\leq~ d^2(y,z) - 2\lambda \Big(h\big(\prox_{\lambda h}(y)\big) - h(z)\Big),
\]
or equivalently,
\[
	d^2(\alpha x + \beta y,z) ~\le~ \frac{\alpha}{\beta} d^2(x,z) + d^2(y,z)  - \alpha\beta d^2(x,y).
\]
The result now follows by combining this inequality with the corresponding version obtained by 
swapping the roles of $\alpha$ and $\beta$, and of $x$ and $y$.
	\end{proof}
	
We can now present the first known complexity result for a geodesic-based minimal enclosing ball algorithm in general CAT($\kappa$) spaces.  Unlike Theorem \ref{thm:FastComplexity}, this analysis uses a constant stepsize, associated with the number of iterations $N$, and resulting in a rate of $O(\frac{1}{\sqrt{N}})$
	(compared with the $O(\frac{1}{N})$ rate of Theorem \ref{thm:FastComplexity}). In the special case of Riemannian manifolds, a complexity analysis for a geodesic-based minimal enclosing ball algorithm appears in \cite{Arnaudon2013}, with a guaranteed rate depending on upper and lower curvature bounds \cite[Proposition 1]{Arnaudon2013}.  Notably, we prove a rate that depends on curvature only via the diameter constraint in Assumption \ref{assumptionB}(i).
 
	\begin{theorem}
		\label{thm:cat(k)complexity}
If Assumption \ref{assumptionB} holds, then any initial sequence of points $x^0,x^1,\ldots,x^N$ generated by Algorithm \ref{GeodesicAlgorithm} with stepsizes $t_k = \Delta := \frac{1}{\sqrt{2(N+1)}}$ satisfies
\[
\min_{k=0,\dots,N}F(x^k) - \min F ~\leq~ \frac{2\sqrt{2}D^2}{\sqrt{N+1}}.
\]
	\end{theorem}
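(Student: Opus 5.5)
The plan is to mimic the proof of Theorem~\ref{thm:FastComplexity}. There, CAT(0) geometry entered only through the one-step inequality \eqref{eqn:termk2}. Here Lemma~\ref{ProxDescent} plays that role. Since $N \ge 0$, we have $\Delta = \frac{1}{\sqrt{2(N+1)}} \le \frac{1}{\sqrt 2} < 1$. If $\Delta \le \frac12$ then $\max\{1-\Delta,\Delta\} = 1-\Delta$; the case $\Delta > \frac12$ (only $N=0$) is covered by the trivial bound below. Apply Lemma~\ref{ProxDescent} with $x = x^k$, $y = \bar a^k$, $z = x_A$, $\alpha = 1-\Delta$, $\beta = \Delta$. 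Then use $d^2(\bar a^k,x_A) \le \min F$ and $d^2(x^k,\bar a^k) = F(x^k)$. This gives
\[
d^2(x^{k+1},x_A) ~\le~ d^2(x^k,x_A) + \frac{\Delta}{1-\Delta}\min F - \Delta(1-\Delta)F(x^k).
\]
Write $F(x^k) = (F(x^k)-\min F) + \min F$ and divide by $\Delta(1-\Delta)$. This yields the analogue of \eqref{eqn:termk2}:
\[
F(x^k)-\min F ~\le~ \frac{d^2(x^k,x_A)-d^2(x^{k+1},x_A)}{\Delta(1-\Delta)} ~+~ \frac{\Delta(2-\Delta)}{(1-\Delta)^2}\min F .
\]
The key structural point is that, unlike in Theorem~\ref{thm:FastComplexity}, the distance terms now telescope with coefficient $1$ and no factor $(1-t_k)$. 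That is exactly why a constant stepsize is the natural choice.

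Next, sum over $k=0,\dots,N$. The distance terms telescope, and we drop $-d^2(x^{N+1},x_A)\le 0$. Then bound the minimum by the average. Since $x^0\in A$, we have $d^2(x^0,x_A)\le \min F$, and $\min F \le F(x^0) \le D^2$. Using $(N+1)\Delta^2 = \frac12$, the first term becomes $\frac{d^2(x^0,x_A)}{(N+1)\Delta(1-\Delta)} = \frac{2\Delta\, d^2(x^0,x_A)}{1-\Delta}$. The result is
\[
\min_k F(x^k)-\min F ~\le~ \Big(\frac{2}{1-\Delta}+\frac{2-\Delta}{(1-\Delta)^2}\Big)\Delta\, \min F .
\]
Asymptotically this is about $4\Delta \min F$. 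The target is $4\Delta D^2 = \frac{2\sqrt2 D^2}{\sqrt{N+1}}$.

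The main obstacle is the constant bookkeeping, because the factors $1/(1-\Delta)$ inflate the coefficient above $4$. First, dispose of small $N$ trivially. When $4\Delta\ge 1$, that is $N+1\le 8$, the claimed bound is at least $D^2$. Meanwhile the gap is at most $F(x^0)-\min F \le D^2$, since $F(x^0) \le D^2$ as $x^0 \in A$. For $N+1\ge 9$ we have $\Delta \le \frac{1}{3\sqrt 2}$. I would then tighten the estimate in one or both of two ways. One option is a better bound on $\min F$ relative to $D^2$, a Jung-type radius estimate under the diameter restriction in Assumption~\ref{assumptionB}(i). The other is to avoid splitting $F(x^k)$ so crudely, for instance by keeping the term $-\Delta(1-\Delta)\min F$ paired with $d^2(x^0,x_A)\le \min F$. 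I would also try applying Lemma~\ref{ProxDescent} in its unsymmetrized form from the proof, $d^2(\alpha x+\beta y,z)\le \frac{\alpha}{\beta}d^2(x,z)+d^2(y,z)-\alpha\beta d^2(x,y)$, with the roles arranged to give the smallest error term. I expect one of these refinements to bring the coefficient down to $2\sqrt2/\sqrt{N+1}$.
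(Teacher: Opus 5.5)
Your one-step inequality $d^2(x^{k+1},x_A)\le d^2(x^k,x_A)+\frac{\Delta}{1-\Delta}\min F-\Delta(1-\Delta)F(x^k)$ is exactly the paper's, and the telescoping and averaging match. The gap is that the proposal never reaches the stated constant. Dividing by $\Delta(1-\Delta)$ charges the loss $1-(1-\Delta)^2$ against $\min F$, inflated by $(1-\Delta)^{-2}$. Your final coefficient $\frac{4-3\Delta}{(1-\Delta)^2}$ then exceeds $4$ for every $\Delta\in(0,1)$. With only $\min F\le D^2$ available, this is strictly weaker than $4\Delta D^2=\frac{2\sqrt2D^2}{\sqrt{N+1}}$, and the argument ends with the hope that some refinement will fix it. The Jung-type refinement cannot do so uniformly. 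You would need $\min F\le\frac{4(1-\Delta)^2}{4-3\Delta}D^2$, but under Assumption~\ref{assumptionB} with $\kappa>0$ the ratio $\min F/D^2$ is not bounded away from $1$. For instance, take $\kappa=1$, let $X$ be a slightly shrunken copy of the spherical simplex $S^n\cap\R^{n+1}_{\ge0}$, and let $A$ be its $n+1$ vertices. Then $D$ is just below $\pi/2$, while the radius is close to $\arccos(1/\sqrt{n+1})$, which tends to $\pi/2$ as $n$ grows.

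The idea you are missing, which the paper uses, is to keep the factor $(1-\Delta)^2$ on $F(x^k)$ rather than moving it onto $\min F$. Multiply the one-step inequality by $\frac{1-\Delta}{\Delta}$ and sum to get $\sum_{k=0}^N\big((1-\Delta)^2F(x^k)-\min F\big)\le\frac{1-\Delta}{\Delta}d^2(x^0,x_A)\le\frac{D^2}{\Delta}$. Then use $(1-\Delta)^2F(x^k)\ge F(x^k)-2\Delta F(x^k)\ge F(x^k)-2\Delta D^2$. The paper obtains $F(x^k)\le D^2$ for all $k$ by induction from convexity of $d^2(\cdot,a)$ along geodesics: $F(x^{k+1})\le(1-\Delta)F(x^k)+\Delta F(\bar a^k)\le D^2$. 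This gives a gap of at most $\frac{D^2}{(N+1)\Delta}+2\Delta D^2=4\Delta D^2$ directly, with no case split on $N$. The case $\Delta>\frac12$ also needs no separate treatment: $\max\{\alpha,\beta\}\ge 1-\Delta$ and the numerator in Lemma~\ref{ProxDescent} is nonnegative, so dividing by $1-\Delta$ is always legitimate. Alternatively, your own estimate closes in one line. Set $c=\frac{(4-3\Delta)\Delta}{(1-\Delta)^2}$ and $G=\min_kF(x^k)-\min F$. You have both $G\le c\min F$ and $G\le F(x^0)-\min F\le D^2-\min F$, hence $G\le\frac{c}{1+c}D^2$. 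The condition $\frac{c}{1+c}\le 4\Delta$ reduces to $(4-3\Delta)(1-4\Delta)\le 4(1-\Delta)^2$, that is, $\Delta(11-8\Delta)\ge 0$, which holds.
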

	\begin{proof}
Since $x^0 \in A$, we first observe
\[
d^2(x^0,x_A) ~\leq~ F(x_A) ~\leq~ F(x^0) ~\leq~ D^2.
\]
At each step $k=0,1,\ldots,N-1$, if $F(x^k) \leq D^2$, then convexity of the squared distance function $d^2( \cdot,a)$ for all $a\in A$ 
	implies
	\[F(x^{k+1}) ~\leq~ (1-\Delta)F(x^k) + \Delta F(\bar a^k) ~\leq~ (1-\Delta)D^2 + \Delta D^2 ~=~ D^2.\]
By induction, we deduce $F(x^k) \leq D^2$ for all $k$.

		At step $k$, apply Lemma \ref{ProxDescent} with $x=x^k, y= \bar a^k, z= x_A$ and $\beta =\Delta$ to see
		\begin{align*}
			d^2(x^{k+1},x_A) &\le  \frac{(1-\Delta)d^2(x^k,x_A) + \Delta d^2(\bar a^k,x_A)}{1-\Delta}
			-\Delta(1-\Delta) d^2(x^k,\bar a^k) \\
			&= d^2(x^k,x_A) - \frac{\Delta}{1-\Delta} \left( (1-\Delta)^2d^2(x^k,\bar a^k) - 
			d^2(\bar a^k,x_A)\right)\\
			&\leq d^2(x^k,x_A) -\frac{\Delta}{1-\Delta}\left( \left(1-\Delta\right)^2 F(x^k) - \min F  \right),
		\end{align*}
		where the last inequality uses the property $d^2(\bar a^k,x_A) \leq F(x_A) = \min F$ and the
identity $F(x^k) = d^2(x^k,\bar a^k)$ follows from the definition of the point $\bar a^k$.
We deduce
	\[
		\label{sumineq}
		\frac{\Delta}{1-\Delta}\sum_{k=0}^N \left( \left(1-\Delta\right)^2F(x^k) - \min F \right)
		~\leq~ d^2(x^0,x_A) - d^2(x^{N+1},x_A) ~\leq~ D^2.
\]
Consequently we have
	\begin{align*}
		\frac{D^2}{\Delta}  \geq \frac{(1-\Delta)D^2}{\Delta} &\geq \sum_{k=0}^N \left( \left(1-\Delta\right)^2F(x^k) - \min F \right)\\
		& \geq 
		(N+1)\min_{k=0,\dots,N} \left( \left(1-\Delta\right)^2F(x^k) - \min F \right)\\
		&\geq (N+1) \left(-2\Delta D^2 + \min_{k=0,\dots,N} F(x^k) - \min F \right),
\end{align*}
using the nonnegativity of $F$ and the bound $F(x^k) \leq D^2$.
	The result now follows.
	\end{proof}

Strictly speaking, Theorems \ref{thm:FastComplexity} and \ref{thm:cat(k)complexity} concern the complexity of the surrogate problem of minimizing the objective $F$, but consequent complexity results for the original minimal enclosing ball problem are straightforward to deduce.  In the case of Hadamard space, suppose that Assumption \ref{assumptionA} holds, and consider any tolerance $\epsilon > 0$.  Then, running Algorithm \ref{GeodesicAlgorithm} with stepsizes $t_k = \frac{2}{k+1}$ for 
$N = \lfloor \frac{8\sqrt{\min F}}{\epsilon} \rfloor$ iterations ensures that one of the iterates $x^k$ is the center of an enclosing ball for $A$ whose radius exceeds the minimal enclosing ball radius by no more than $\epsilon$.  The same conclusion holds in CAT($\kappa$) spaces, now under Assumption \ref{assumptionB}, after  running Algorithm \ref{GeodesicAlgorithm} for $N = \lfloor\frac{32\min F}{\epsilon^2}\rfloor$ iterations with constant stepsize $t_k = \frac{1}{\sqrt{2(N+1)}}$.  Since the details of the argument are routine, we omit them.

	To illustrate Algorithm \ref{GeodesicAlgorithm} we construct a simple 
	but non-Riemannian CAT(1) space on the surface of the Euclidean sphere
	$S^2$. Consider two identical spherical caps centered at the north pole. Rotate each cap about the $x$-axis, one clockwise and the other counterclockwise, such that the
	two caps maintain a small overlap. Provided that the caps have sufficiently small radius, the union of the two caps
	endowed with the length metric induced from the angular metric on $S^2$ is a CAT(1) space satisfying Assumption
	\ref{assumptionB}(i)
	\cite[II.11.1]{Bridson1999}. This space and the outcome of running Algorithm \ref{GeodesicAlgorithm} on a
	finite set $A$ are visualized in Figure \ref{fig:experiment}.
	
	In this experiment the radius of each spherical cap is $\frac{5\pi}{36} - \delta$ and the offset from the north pole 
	is $\frac{\pi}{12}+\delta$, where $\delta = \frac{17}{200}$ and these quantities are measured in radians along
	the surface of the sphere. Figure \ref{fig:experiment} shows the space $X$ as the shaded portion of the sphere,
	the set $A$ consisting of 5 points, the trajectory of the algorithm's iterates when run with $t_k = (2(N+1))^{-1/2}$ 
	for $N=10^7$ iterations,
	and the approximate center and enclosing ball. The enclosing ball in $X$ is the intersection of the plotted ball
	with the shaded portion.

	\begin{center}
	\begin{figure}[h!]
		\hspace{0cm}\includegraphics[trim={0 10cm 0 4.5cm},clip, scale=0.7]{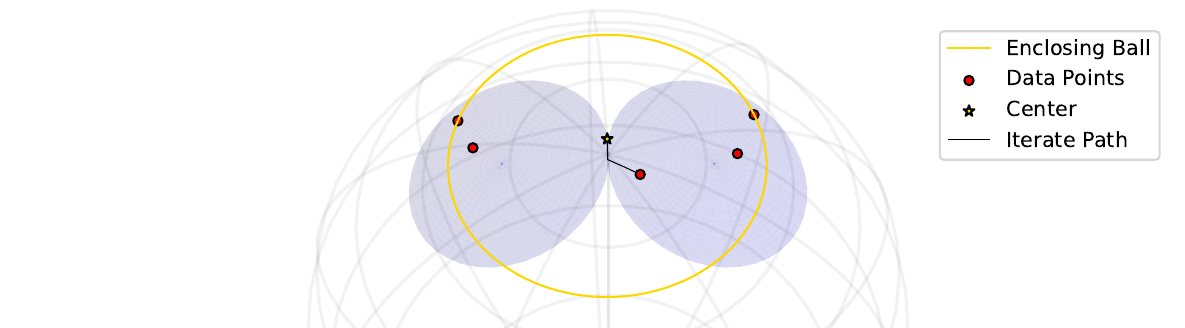}
		\caption{Computing the minimal enclosing ball on a CAT(1) space}
\label{fig:experiment}
	\end{figure}
	\end{center}
		
	\bibliographystyle{plain}
	\small
	\bibliography{ref}
\end{document}